\DeclareFontFamily{U}{mathx}{\hyphenchar\font45}
\DeclareFontShape{U}{mathx}{m}{n}{
      <5> <6> <7> <8> <9> <10>
      <10.95> <12> <14.4> <17.28> <20.74> <24.88>
      mathx10
      }{}
\DeclareSymbolFont{mathx}{U}{mathx}{m}{n}
\DeclareMathAccent{\widecheck}{0}{mathx}{"71}
\DeclareMathAccent{\wideparen}{0}{mathx}{"75}
\newcommand{\N}{\mathbb{N}}
\newcommand{\R}{\mathbb{R}}
\newcommand{\Z}{\mathbb{Z}}
\newcommand{\rk}{\operatorname{rk}}
\newtheorem{teor}{Theorem}
\newtheorem{lemma}{Lemma}
\theoremstyle{remark}
\theoremstyle{remark}
\newtheorem{remark}{Remark}
\newcommand{\nwc}{\newcommand}
\nwc{\nwt}{\newtheorem}
\nwc{\mf}{\mathbf} 
\nwc{\blds}{\boldsymbol} 
\nwc{\ml}{\mathcal} 
\nwc{\lam}{\lambda}
\nwc{\del}{\delta}
\nwc{\Del}{\Delta}
\nwc{\Lam}{\Lambda}
\nwc{\elll}{\ell}
\nwc{\IA}{\mathbb{A}} 
\nwc{\IB}{\mathbb{B}} 
\nwc{\IC}{\mathbb{C}} 
\nwc{\ID}{\mathbb{D}} 
\nwc{\IE}{\mathbb{E}} 
\nwc{\IF}{\mathbb{F}} 
\nwc{\IG}{\mathbb{G}} 
\nwc{\IH}{\mathbb{H}} 
\nwc{\IN}{\mathbb{N}} 
\nwc{\IP}{\mathbb{P}} 
\nwc{\IQ}{\mathbb{Q}} 
\nwc{\IR}{\mathbb{R}} 
\nwc{\IS}{\mathbb{S}} 
\nwc{\IT}{\mathbb{T}} 
\nwc{\IZ}{\mathbb{Z}} 
\def\bbleft{{\mathchoice {[\mskip-3mu {[}} {[\mskip-3mu {[}}{[\mskip-4mu {[}}{[\mskip-5mu {[}}}}
\def\bbright{{\mathchoice {]\mskip-3mu {]}} {]\mskip-3mu {]}}{]\mskip-4mu {]}}{]\mskip-5mu {]}}}}
\nwc{\setK}{\bbleft 1,K \bbright}
\nwc{\setN}{\bbleft 1,\cN \bbright}
\nwc{\va}{{\bf a}}
\nwc{\vb}{{\bf b}}
\nwc{\vc}{{\bf c}}
\nwc{\vd}{{\bf d}}
\nwc{\ve}{{\bf e}}
\nwc{\vf}{{\bf f}}
\nwc{\vg}{{\bf g}}
\nwc{\vh}{{\bf h}}
\nwc{\vi}{{\bf i}}
\nwc{\vI}{{\bf I}}
\nwc{\vj}{{\bf j}}
\nwc{\vk}{{\bf k}}
\nwc{\vl}{{\bf l}}
\nwc{\vm}{{\bf m}}
\nwc{\vM}{{\bf M}}
\nwc{\vn}{{\bf n}}
\nwc{\vo}{{\it o}}
\nwc{\vp}{{\bf p}}
\nwc{\vq}{{\bf q}}
\nwc{\vr}{{\bf r}}
\nwc{\vs}{{\bf s}}
\nwc{\vt}{{\bf t}}
\nwc{\vu}{{\bf u}}
\nwc{\vv}{{\bf v}}
\nwc{\vw}{{\bf w}}
\nwc{\vx}{{\bf x}}
\nwc{\vy}{{\bf y}}
\nwc{\vz}{{\bf z}}
\nwc{\bal}{\blds{\alpha}}
\nwc{\bep}{\blds{\epsilon}}
\nwc{\barbep}{\overline{\blds{\epsilon}}}
\nwc{\bnu}{\blds{\nu}}
\nwc{\bmu}{\blds{\mu}}
\nwc{\bet}{\blds{\eta}}
\nwc{\bk}{\blds{k}}
\nwc{\bm}{\blds{m}}
\nwc{\bM}{\blds{M}}
\nwc{\bp}{\blds{p}}
\nwc{\bq}{\blds{q}}
\nwc{\bn}{\blds{n}}
\nwc{\bv}{\blds{v}}
\nwc{\bw}{\blds{w}}
\nwc{\bx}{\blds{x}}
\nwc{\bxi}{\blds{\xi}}
\nwc{\by}{\blds{y}}
\nwc{\bz}{\blds{z}}
\nwc{\cA}{\ml{A}}
\nwc{\cB}{\ml{B}}
\nwc{\cC}{\ml{C}}
\nwc{\cD}{\ml{D}}
\nwc{\cE}{\ml{E}}
\nwc{\cF}{\ml{F}}
\nwc{\cG}{\ml{G}}
\nwc{\cH}{\ml{H}}
\nwc{\cI}{\ml{I}}
\nwc{\cJ}{\ml{J}}
\nwc{\cK}{\ml{K}}
\nwc{\cL}{\ml{L}}
\nwc{\cM}{\ml{M}}
\nwc{\cN}{\ml{N}}
\nwc{\cO}{\ml{O}}
\nwc{\cP}{\ml{P}}
\nwc{\cQ}{\ml{Q}}
\nwc{\cR}{\ml{R}}
\nwc{\cS}{\ml{S}}
\nwc{\cT}{\ml{T}}
\nwc{\cU}{\ml{U}}
\nwc{\cV}{\ml{V}}
\nwc{\cW}{\ml{W}}
\nwc{\cX}{\ml{X}}
\nwc{\cY}{\ml{Y}}
\nwc{\cZ}{\ml{Z}}
\nwc{\fA}{\mathfrak{a}}
\nwc{\fB}{\mathfrak{b}}
\nwc{\fC}{\mathfrak{c}}
\nwc{\fD}{\mathfrak{d}}
\nwc{\fE}{\mathfrak{e}}
\nwc{\fF}{\mathfrak{f}}
\nwc{\fG}{\mathfrak{g}}
\nwc{\fH}{\mathfrak{h}}
\nwc{\fI}{\mathfrak{i}}
\nwc{\fJ}{\mathfrak{j}}
\nwc{\fK}{\mathfrak{k}}
\nwc{\fL}{\mathfrak{l}}
\nwc{\fM}{\mathfrak{m}}
\nwc{\fN}{\mathfrak{n}}
\nwc{\fO}{\mathfrak{o}}
\nwc{\fP}{\mathfrak{p}}
\nwc{\fQ}{\mathfrak{q}}
\nwc{\fR}{\mathfrak{r}}
\nwc{\fS}{\mathfrak{s}}
\nwc{\fT}{\mathfrak{t}}
\nwc{\fU}{\mathfrak{u}}
\nwc{\fV}{\mathfrak{v}}
\nwc{\fW}{\mathfrak{w}}
\nwc{\fX}{\mathfrak{x}}
\nwc{\fY}{\mathfrak{y}}
\nwc{\fZ}{\mathfrak{z}}
\nwc{\tA}{\widetilde{A}}
\nwc{\tB}{\widetilde{B}}
\nwc{\tE}{E^{\vareps}}
\nwc{\tk}{\tilde k}
\nwc{\tN}{\tilde N}
\nwc{\tP}{\widetilde{P}}
\nwc{\tQ}{\widetilde{Q}}
\nwc{\tR}{\widetilde{R}}
\nwc{\tV}{\widetilde{V}}
\nwc{\tW}{\widetilde{W}}
\nwc{\ty}{\tilde y}
\nwc{\teta}{\tilde \eta}
\nwc{\tdelta}{\tilde \delta}
\nwc{\tlambda}{\tilde \lambda}
\nwc{\ttheta}{\tilde \theta}
\nwc{\tvartheta}{\tilde \vartheta}
\nwc{\tPhi}{\widetilde \Phi}
\nwc{\tpsi}{\tilde \psi}
\nwc{\tmu}{\tilde \mu}
\nwc{\To}{\longrightarrow} 
\nwc{\ad}{\rm ad}
\nwc{\eps}{\varepsilon}
\nwc{\ep}{\epsilon}
\nwc{\vareps}{\varepsilon}
\def\ep{\epsilon}
\def\sq2{\sqrt{2}}
\def\t2{{\mathbb T}^2}
\def\s2{{\mathbb S}^2}
\def\N{\mathbb{N}}
\def\R{\mathbb{R}}
\def\Z{\mathbb{Z}}
\nwc{\lap}{\bigtriangleup}
\nwc{\rest}{\restriction}
\nwc{\Diff}{\operatorname{Diff}}
\nwc{\diam}{\operatorname{diam}}
\nwc{\Res}{\operatorname{Res}}
\nwc{\Spec}{\operatorname{Sp}}
\nwc{\Vol}{\operatorname{Vol}}
\nwc{\Op}{\operatorname{Op}}
\nwc{\supp}{\operatorname{supp}}
\nwc{\Span}{\operatorname{span}}
\nwc{\weaksc}{\overset{\ast}{\rightharpoonup}}
\nwc{\charf}{\mathds{1}}
\nwc{\notimplies}{\centernot\implies}
\nwc{\dia}{\varepsilon}
\nwc{\cut}{f}
\nwc{\qm}{u_\hbar}
\def\hto0{\xrightarrow{\hbar\to 0}}
\def\rto0{\xrightarrow{r\to 0}}
\nwc{\la}{\langle}
\nwc{\ra}{\rangle}
\nwc{\lp}{\left(}
\nwc{\rp}{\right)}
\nwc{\bequ}{\begin{equation}}
\nwc{\be}{\begin{equation}}
\nwc{\ben}{\begin{equation*}}
\nwc{\bea}{\begin{eqnarray}}
\nwc{\bean}{\begin{eqnarray*}}
\nwc{\bit}{\begin{itemize}}
\nwc{\bver}{\begin{verbatim}}
\nwc{\eequ}{\end{equation}}
\nwc{\ee}{\end{equation}}
\nwc{\een}{\end{equation*}}
\nwc{\eea}{\end{eqnarray}}
\nwc{\eean}{\end{eqnarray*}}
\nwc{\eit}{\end{itemize}}
\nwc{\ever}{\end{verbatim}}
\title{Localization and delocalization of eigenmodes of Harmonic Oscillators}
\author[Víctor Arnaiz]{Víctor Arnaiz}
\address{Université Paris-Saclay, CNRS, Laboratoire de Mathématiques d'Orsay, 91405, Orsay, FRANCE.}
\email{victor.arnaiz@universite-paris-saclay.fr}
\author[Fabricio Macià]{Fabricio Macià}
\address{M$^2$ASAI, Universidad Politécnica de Madrid.  \newline
ETSI Navales, Avda. de la Memoria, 4. 28040 Madrid, SPAIN.}
\email{fabricio.macia@upm.es}
\begin{document}

\begin{abstract}
We characterize quantum limits and semi-classical measures corresponding to sequences of eigenfunctions for systems of coupled quantum harmonic oscillators with arbitrary frequencies. The structure of the set of semi-classical measures turns out to depend strongly on the arithmetic relations between frequencies of each decoupled oscillator. In particular, we show that as soon as these frequencies are not rational multiples of a fixed fundamental frequency, the set of semi-classical measures is not convex and therefore, infinitely many measures that are invariant under the classical harmonic oscillator are not semi-classical measures.
\end{abstract}

\maketitle

\section{Introduction}

Understanding the distribution of high-frequency eigenfunctions of elliptic operators, in particular the presence of scarring (concentration on subsets of zero Lebesgue measure), has been the subject of intensive study in the past fifty years. 
In this note, we discuss this problem for eigenfunctions of Schrödinger operators acting on $ L^2(\R^{d})$ of the form:
\begin{equation}\label{e:defp}
\widehat{P}:=- \frac{1}{2}  \Delta_x + \frac{1}{2} Q.
\end{equation}
with $Q$ a positive definite quadratic form on $\R^d$.

Since $\widehat{P}$ has compact resolvent in $L^2(\R^d)$, its spectrum is discrete and unbounded. Moreover, the non-compactness of $\R^d$ implies that the $ L^2(\R^{d})$-mass of any sequence of normalized eigenfunctions with eigenvalues tending to infinity will escape any bounded set, both in position and momentum variables. In order to observe eventual concentration-like behavior on the eigenfunctions, it is natural to rescale the problem. Consider the unitary operators:
\[
T_\hbar: L^2(\R^{d})\ni\psi \longmapsto \frac{1}{\hbar^{d/4}}\psi\left(\frac{\cdot}{\sqrt{\hbar}}\right)\in L^2(\R^{d}), \quad \hbar>0;
\]
then  
\[
\widehat{P}_\hbar :=- \frac{1}{2} \hbar^2 \Delta_x + \frac{1}{2} Q=\hbar  T_\hbar \widehat{P} T_\hbar^*.
\] 
A further unitary conjugation, which essentially amounts to diagonalizing the quadratic form $Q$, transforms $\widehat{P}_\hbar$ into a semi-classical quantum harmonic oscillator:
\begin{equation}
\label{quantum_harmonic_oscillator}
\widehat{H}_\hbar: = \frac{1}{2} \sum_{j=1}^d \omega_j \big( - \hbar^2 \partial_{x_j}^2 +  x_j^2 \big),
\end{equation}
where $\omega_1^2\leq \ldots \leq \omega_d^2 $ are the eigenvalues of $Q$ and the frequencies are chosen to satisfy $\omega_j >0$ for $j=1,\hdots,d$. In particular, the spectrum of this operator satisfies:
\[
\Spec(\widehat{H}_\hbar)=\hbar \Spec(\widehat{P}).
\]

The vector 
\[
\omega := (\omega_1, \ldots , \omega_d) \in \R^d_+
\]
is called the \textit{vector of frequencies}; its arithmetic properties will play an important role in the sequel.

The above considerations lead to naturally consider the asymptotic properties of eigenfunctions in the \textit{semi-classical limit}, that is, letting the eigenvalue $\lambda_n$ tend to infinity and the semi-classical parameter $\hbar_n$ tend to zero while keeping the energy $\hbar_n\lambda_n$ constant. More explicitly, we will consider sequences of eigenfunctions $(\Psi_\hbar)$ satisfying 
\begin{equation}
\label{e:stationary_problem}
\widehat{H}_\hbar \, \Psi_\hbar = \lambda_\hbar \, \Psi_\hbar, \quad \Vert \Psi_\hbar \Vert_{L^2(\R^d)} = 1,\quad \text{where }\lambda_\hbar \to 1 \text{ as } \hbar \to 0^+.
\end{equation}
In particular, given a sequence $(\Psi_\hbar)$ of solutions to \eqref{e:stationary_problem}, we are interested in investigating the structure of the accumulation points of the sequence of probability densities $(\vert \Psi_\hbar \vert^2 dx)$ (in the weak-$\star$ topology of Radon measures on $\R^d$). These accumulation points are again probability measures in $\R^d$ and are sometimes called \textit{quantum limits}. 

The existence of quantum limits that are singular with respect to the Lebesgue measure is known as \textit{eigenfunction scarring}; understanding which systems exhibit this type of behavior is a notoriously difficult problem. 

Quantum limits can be defined for general elliptic operators with compact resolvent, for instance a Schrödinger operator  on a compact manifold or a bounded domain of Euclidean space. In this setting, they have been completely characterized in relatively few cases: the Laplacian on spheres \cite{Jak_Zel99}, and more generally on compact rank-one symmetric spaces \cite{MaciaZoll} or space forms \cite{AM:10}; and the Laplacian on the two dimensional torus $\mathbb{T}^2=\R^2/\mathbb{Z}^2$ \cite{Jak97}. Spheres and their relatives exhibit the strongest form of scarring: any uniform measure supported on a geodesic is a quantum limit. The situation is completely different for tori: quantum limits are always absolutely continuous (although concentration in the momentum variable is possible, this will be discussed below). 

Other settings that are relatively well-understood include the Laplacian on Zoll manifolds \cite{MaciaRiviere16} (manifolds whose geodesics are closed), where scarring takes place, but usually in a weaker form: there are uncountable families of geodesics onto which no eigenfunctions scar. The Laplacian on manifolds with non-degenerate completely integrable geodesic flow has also been studied \cite{An_Fer_Mac15, TothIntegrable, Wunsch2012}; scarring cannot take place on regions that can be parametrized by action-angle coordinates. A similar phenomenology can be found for the Dirchlet Laplacian on planar domains with completely integrable billiard flow: rational polygonal domains \cite{HHM, MarRud12} or the Euclidean disk \cite{ALM:16}. 

Not much is known in the case the Laplacian on a manifold whose geodesic flow is a small perturbation of a completely integrable system (KAM systems); however, there have been some interesting recent developments in that direction \cite{Ar18,Gomes18,Gom_Hass18}, where some form scarring has been proved. On the other side of the dynamics lie manifolds of negative curvature; characterizing the set of Quantum Limits in this case is part of the Quantum Unique Ergodicity conjecture \cite{RudSar}: the conjecture implies that the only quantum limit is the Riemannian volume. The literature is vast in this setting; see, among many others, \cite{NAQUE,AnNon,BouLin,CdV:85,DyatlovJin,Hass10,LindenQUE,Sni, Zel87}.

As all the preceding examples indicate, a central role in this problem is played by a Hamiltonian dynamical system (the geodesic flow) acting on phase-space (the cotangent bundle of a manifold). Here, the relevant object is the classical harmonic oscillator:
\begin{equation}
\label{classical_harmonic_oscillator}
H(x,\xi) = \frac{1}{2}  \sum_{j=1}^d  \omega_j \big( \xi_j^2 +  x_j^2 \big), \quad (x,\xi) \in T^*\R^d=\R^{d}\times\R^d,
\end{equation}
whose induced Hamiltonian flow will be denoted by $\phi^H_t$.  
It is therefore natural to lift to phase-space $T^*\R^d = \R^d_x \times \R^d_\xi$ all objects of interest, since this is where the classical Hamiltonian flow is defined. Here we will lift the density $|\psi|^2$ of a function $\psi \in L^2(\R^d)$ through its associated \textit{Wigner function} $W^\hbar_{\psi}\in L^2(\R^d\times\R^d)$, which is defined by:
\[
W^\hbar_{\psi}(x,\xi):=\int_{\R^d}\psi\left(x-\frac{\hbar v}{2}\right)\overline{\psi\left(x+\frac{\hbar v}{2}\right)}e^{i\xi\cdot v}\frac{dv}{(2\pi)^d}.
\]
That this is actually a lift follows from this useful property: for every $a \in \mathcal{C}^\infty(\R^{d}\times\R^d)$ that is bounded, 
$$
\int_{\R^{d}\times\R^d}a(x,\xi)W^\hbar_{\psi}(x,\xi)\,dx\,d\xi= \big \langle \psi , \Op_\hbar(a) \psi \big \rangle_{L^2(\R^d)}, 
$$
where $\Op_\hbar(a)$ denotes the semiclassical Weyl quantization of the symbol $a$ (see \cite{Dim_Sjo99, FollandPhaseSpace, Zwobook}, for instance) and $\langle \cdot, \cdot \rangle_{L^2(\R^d)}$ is the scalar product on $L^2(\R^d)$. When $a$ does not depend on the $\xi$ variable, the operator $\Op_\hbar(a)$ is the multiplication operator by $a$, and therefore:
\begin{equation}\label{e:lift}
\int_{\R^d}W^\hbar_{\psi}(\cdot,\xi)d\xi =|\psi|^2. 
\end{equation}

If $(\Psi_\hbar)$ solves \eqref{e:stationary_problem} then $(W^\hbar_{\Psi_\hbar})$ is bounded in the space of tempered distributions $\cS'(\R^d\times\R^d)$. Moreover, every accumulation point of $(W_{\Psi_\hbar}^\hbar)$ belongs to the set $\mathcal{M}(H)$ of Radon probability measures on $\R^d\times\R^d$ that are concentrated on the level set $H^{-1}(1)$ and that are invariant by the Hamiltonian flow $\phi_t^H$. See for instance \cite{GerardMesuresSemi91} for proofs of these facts. 

We will denote by $\mathcal{M}(\widehat{H}_\hbar)$ the set of all accumulation points obtained in this way, as the sequence $(\Psi_\hbar)$ varies among those satisfying \eqref{e:stationary_problem}. Its elements are called \textit{semi-classical measures}. 

The lift property \eqref{e:lift} still holds after taking limits. Therefore, the set of quantum limits is obtained from $\mathcal{M}(\widehat{H}_\hbar)$ by projecting semi-classical measures onto the $x$-variable.

In order to state our result, we recall some basic facts on the dynamics of the harmonic oscillator. Set
\[ 
H_j(x,\xi) = \frac{1}{2} \big( \xi_j^2 + x_j^2 \big), \quad j \in \{1, \ldots , d \}.
\]
We can write $H$ as a function of $H_1, \ldots, H_d$ by putting
\begin{equation}
\label{e:linear_for_H}
H = \mathcal{L}_\omega(H_1, \ldots, H_d),
\end{equation}
where $\mathcal{L}_\omega : \R^d_+ \to \R$ is the linear form defined by $\mathcal{L}_\omega(E) = \omega \cdot E$. 
Since $\{ H_j , H_k \} = 0$ for every $j,k\in \{1, \ldots , d \}$, the Hamiltonian flow of $H$ can be written as 
\[
\phi_t^H(z)= \phi_{\omega_d t}^{H_d} \circ \cdots \circ \phi_{\omega_1 t}^{H_1}(z), \;z = (x,\xi)\in\R^{2d},
\] 
where $\phi_{t}^{H_j}$ denotes the flow of $H_j$. These flows are totally explicit, they act as a rotation of angle $t$ on the plane $(x_j,\xi_j)$. If one identifies points $(x_j,\xi_j)$ in this plane to the complex numbers $z_j:=x_j+i\xi_j$, then $\phi_{t}^{H_j}$ acts on this plane as $e^{-it}z_j$ and fixes the points in its orthogonal complement. 

These flows are $2\pi$-periodic, therefore Konecker's theorem shows that the orbit of $\phi^H_t$ of any point $z_0\in \cX$, where
\[
\cX:=\{z\in H^{-1}(1)\,:\, H_j(z)>0, \;j=1,\hdots, d\},
\]
is dense in a torus of dimension 
\[
d_\omega:= \dim S_\omega,
\]
where
\[
S_\omega=\langle \omega_1,\hdots,\omega_d \rangle_\IQ
\]
is the linear subspace of $\R$, viewed as a vector space over the rationals, spanned by the frequencies. 

When $d_\omega = d$, orbits corresponding to points $z_0\in \cX$ are dense in a $d$-dimensional Lagrangian torus. When $d_\omega=1$, the flow $\phi^H_t$ is periodic. More precisely, the following holds.

\begin{remark}\label{r:period}
Suppose $v\in\IR^d$ is an arbitrary vector of frequencies satisfying $\dim S_v=1$, then the flow of the harmonic oscillator with Hamiltonian $\sum_{j=1}^d v_j H_j$ is periodic of period 
\[
T_v:=2\pi k_v/[v],
\]
where $[v]:=\min{\{|v_j|\,:\,v_j\neq 0,\, j=1,\hdots,d\}}$ and $k_v$ is the least positive integer such that $k_v [v]^{-1}v\in\Z^d$. 
\end{remark}

The Hamiltonian of the harmonic oscillator admits a decomposition that is particularly useful to our purposes. Let $\{v_1, \ldots, v_{d_\omega} \}$ be a basis of $S_\omega$. Then, for every $1 \leq n \leq d_\omega$, there exists $\nu_n \in \mathbb{Q}^{d}$ such that
$$
\omega = \sum_{n=1}^{d_\omega} v_n \nu_{n}.
$$
Note that the vectors $\nu_n$, $n=1,\hdots,d_\omega$ must be linearly independent and generate a linear subspace that only depends on $\omega$. Write for every $n=1,\hdots,d_\omega$,
\begin{equation}\label{e:decper}
\cH_n:= v_n \sum_{j=1}^d \nu_{n,j} H_j,
\end{equation}
so that $H=\sum_{n=1}^{d_\omega}\cH_n$.  The flows $\phi^{\cH_n}_t$ of $\cH_n$ with $n=1,\hdots,d_\omega$ are periodic since the  vector of frequencies of $\cH_n$ is $v_n\nu_n$ which satisfies $\dim S_{v_n\nu_n}=1$. Moreover, those measures invariant by the flow of the harmonic oscillator are precisely those simultaneously invariant by all $\phi^{\cH_n}_t$:
\begin{equation}\label{e:siminv}
\mu\in\cM(H)\quad \iff \quad (\phi^{\cH_n}_t)_*\mu=\mu,\quad\forall t\in \R,\;n=1,\hdots,d_\omega.
\end{equation}

Our main result characterizes the set of semi-classical measures associated to eigenstates of the Hamiltonian $\widehat{H}_\hbar$. It turns out that it consists precisely of those invariant measures supported on the intersection of the level sets of the Hamiltonians $\cH_n$, $n=1,\hdots,d_\omega$. Let $\mathcal{H} := (\mathcal{H}_1, \ldots, \mathcal{H}_{d_\omega})$ and write:
\[
\Sigma_\cH:=\{\cH(z)\,:\,z\in H^{-1}(1)\}.
\]
Note that $\Sigma_\cH$ is a compact subset of $\IR^{d_\omega}$ and $\cH^{-1}(E)\subseteq H^{-1}(1)$ for every $E\in\Sigma_\cH$.
\begin{teor}
\label{non-perturbed-eigenfunctions}
A probabily measure $\mu$ on $\R^d\times\R^d$ satisfies $\mu\in \mathcal{M}(\widehat{H}_\hbar) $ if and only if there exists $E\in \Sigma_\cH$ such that 
\[
\supp\mu\subseteq \cH^{-1}(E),
\]
and
\[
(\phi^{\cH_n}_t)_*\mu=\mu,\quad\forall t\in \R,\;n=1,\hdots,d_\omega.
\]
\end{teor}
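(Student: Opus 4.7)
The plan rests on a key observation: thanks to the arithmetic of $\omega$, every eigenfunction $\Psi_\hbar$ of $\widehat{H}_\hbar$ is automatically a \emph{joint} eigenfunction of all the operators $\widehat{\mathcal{H}}_{n,\hbar}$, $n=1,\hdots,d_\omega$. To see this, I would decompose $\Psi_\hbar=\sum_\alpha c_\alpha\psi_\alpha^\hbar$ in the Hermite basis, which is the common eigenbasis of every $\widehat{H}_{j,\hbar}$ with eigenvalues $\hbar(\alpha_j+\tfrac12)$. The sum is supported on the set of $\alpha\in\IN^d$ with $\omega\cdot(\alpha+\tfrac12)=\lambda_\hbar/\hbar$, on which any two $\alpha,\beta$ satisfy $\sum_n v_n\,(\nu_n\cdot(\alpha-\beta))=0$; since $\nu_n\cdot(\alpha-\beta)\in\IQ$ and the $v_n$ are $\IQ$-linearly independent, we deduce $\nu_n\cdot(\alpha-\beta)=0$ for every $n$, hence each $\widehat{\mathcal{H}}_{n,\hbar}$ acts by a single scalar $E_{n,\hbar}:=\hbar v_n\,\nu_n\cdot(\alpha+\tfrac12)$ on the whole $\widehat{H}_\hbar$-eigenspace.

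\emph{Necessity} then follows by applying standard semiclassical arguments to each $\widehat{\mathcal{H}}_{n,\hbar}$ rather than the global Hamiltonian. The exact Egorov theorem (coming from the metaplectic representation for quadratic quantizations) turns the quantum invariance $e^{-it\widehat{\mathcal{H}}_{n,\hbar}/\hbar}\Psi_\hbar=e^{-itE_{n,\hbar}/\hbar}\Psi_\hbar$ into invariance of the Wigner limit $\mu$ under $\phi_t^{\mathcal{H}_n}$. Localization on $\mathcal{H}_n^{-1}(E_n)$ is obtained by testing the identity $(\widehat{\mathcal{H}}_{n,\hbar}-E_{n,\hbar})\Psi_\hbar=0$ against an arbitrary Weyl symbol $a$ supported away from $\mathcal{H}_n=E_n$; conjunction over $n$ yields $\supp\mu\subseteq\mathcal{H}^{-1}(E)$ with $E=\lim E_\hbar\in\Sigma_{\cH}$, the constraint $\sum_n E_n=\lim\lambda_\hbar=1$ guaranteeing $E\in\Sigma_{\cH}$.

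For \emph{sufficiency}, fix $E\in\Sigma_{\cH}$ and a measure $\mu$ supported in $\mathcal{H}^{-1}(E)$ invariant under every $\phi_t^{\mathcal{H}_n}$. The commuting periodic flows generate a $\IT^{d_\omega}$-action on $\mathcal{H}^{-1}(E)$, and by ergodic decomposition one writes $\mu$ as an integral over orbit (torus) measures. Each ergodic component should be realized as the Wigner limit of a single Hermite state $\psi_{\alpha_\hbar}^\hbar$, where $(\alpha_\hbar)\subset\IN^d$ is chosen so that $\hbar v_n\,\nu_n\cdot(\alpha_\hbar+\tfrac12)\to E_n$ and the action coordinates $\hbar(\alpha_\hbar+\tfrac12)$ target the prescribed torus; existence of such $(\alpha_\hbar)$ follows from a density argument in the interior of $\Sigma_{\cH}$, with boundary points handled by continuity and reduction to lower-dimensional subsystems (where some $H_j$ vanishes). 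General invariant measures are then assembled as orthogonal superpositions $\Psi_\hbar=\sum_j c_j\,\psi_{\alpha_\hbar^{(j)}}^\hbar$ inside the same $\widehat{H}_\hbar$-eigenspace, verifying that the off-diagonal Wigner cross-terms vanish weakly using the explicit structure of Hermite functions. A final approximation of $\mu$ by finitely supported convex combinations of orbit measures, followed by a diagonal extraction in $\hbar$, concludes the construction.

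The \textbf{main obstacle} I anticipate is the second half of sufficiency: realizing \emph{non-ergodic} $\mu$ faithfully requires controlling the decay of the Wigner cross-terms uniformly along the diagonal extraction, and -- more subtly -- ensuring that inside each $\widehat{H}_\hbar$-eigenspace one can find sufficiently many Hermite indices matching all the orbits contributing to the support of $\mu$. The size and geometry of these eigenspaces is dictated by the arithmetic of $\omega$: when $d_\omega=d$ they are essentially one-dimensional, forcing all enrichment to come from varying $\hbar$; when $d_\omega<d$ the intrinsic degeneracy is large and allows broad superpositions, but introduces resonance-related identifications among orbits that must be handled carefully to avoid over-counting.
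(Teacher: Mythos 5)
Your argument for the \emph{necessity} direction is essentially sound and is in fact the same one the paper uses: the $\IQ$-linear independence of $v_1,\ldots,v_{d_\omega}$ forces every eigenfunction of $\widehat{H}_\hbar$ to be a joint eigenfunction of the operators $\Op_\hbar(\cH_n)$, and the joint eigenvalue relations then give both the invariance and the localization of the limit measure. One small inaccuracy: the constraint $\sum_n E_n=\lim\lambda_\hbar=1$ does \emph{not} by itself guarantee $E\in\Sigma_\cH$ (the $\nu_{n,j}$ may have mixed signs, so not every such $E$ is attained on $H^{-1}(1)$); you also need $\cH^{-1}(E)\neq\emptyset$, which follows either because the nonzero probability measure $\mu$ is supported in $\cH^{-1}(E)$, or because $E$ is a limit of points $\cH(z_\hbar)$ with $H(z_\hbar)=\lambda_\hbar\to1$ and $\{H\le 2\}$ is compact. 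This is repairable in one line.

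The genuine gap is in your \emph{sufficiency} construction. The ergodic components of the $\IT^{d_\omega}$-action generated by the flows $\phi^{\cH_n}_t$ are the uniform measures on single orbits $\cT_\omega(z_0)$, tori of dimension at most $d_\omega$ whose position depends on the \emph{angle} coordinates of $z_0$, not only on its actions. By contrast, the semiclassical measure of a single Hermite state $\psi^\hbar_{\alpha_\hbar}$ with $\hbar(\alpha_\hbar+\tfrac12)\to I$ is the Haar measure of the full action torus $\{H_1=I_1,\ldots,H_d=I_d\}$ (circles degenerating to points where $I_j=0$): it equidistributes in \emph{all} angle variables. Hence, whenever $d_\omega<d$, single Hermite states --- and any superposition of them whose Wigner cross-terms vanish in the limit --- can only produce convex combinations of these $\IT^d$-invariant Haar measures; they can never produce the orbit measure $\delta_{\cT_\omega(z_0)}$ through a generic point, e.g.\ the delta measure on a single closed orbit of the isotropic oscillator through a point with all $H_j(z_0)>0$. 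Yet precisely these scarred measures are the extreme points of $\cM_E(\cH)$ that the theorem asserts belong to $\cM(\widehat{H}_\hbar)$, so your scheme cannot be completed as described: realizing them requires phase-correlated combinations of Hermite states inside one joint eigenspace, not incoherent superpositions. The paper obtains them by time-averaging (equivalently, spectrally projecting) a coherent state centered at $z_0$ over the joint periodic flows of the $\cH_n$ (Lemma \ref{concentration_minimal_set}, in the spirit of De Bi\`evre and Ojeda-Valencia--Villegas-Blas), with a stationary-phase computation of the normalization and a separate treatment of points where $\rk d\cH_{z_0}<d_\omega$; a further essential feature is that the resulting joint eigenvalue $\Lambda_\hbar$ depends only on $E$ and not on $z_0$, which is what allows superposing eigenfunctions attached to \emph{different} orbits within a common eigenspace (their limit measures being mutually singular, the cross-terms do vanish) and then passing to general elements of $\cM_E(\cH)$ by Krein--Milman. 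Without an ingredient of this kind your proposal establishes only a strict subset of the sufficiency statement, namely the $\IT^d$-invariant measures, and misses the scarring phenomenon that is the main content of the theorem when $1\le d_\omega<d$.
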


Note that, since the commuting Hamiltonian vector fields $X_{\cH_n}$ of $\cH_n$ are linearly independent when restricted to $\cX$, Theorem \ref{non-perturbed-eigenfunctions} implies that the restriction of the measures in $\mathcal{M}(\widehat{H}_\hbar)$ to  $\cX$ are smoother as $d_\omega$ increases. 
\begin{remark}
Notice that $\mathcal{H}$ depends on a particular choice of the basis $\{ v_1, \ldots, v_{d_\omega} \}$. However, 
the resulting partition of $H^{-1}(1)$ in level sets $\mathcal{H}^{-1}(E)$ for $E\in \Sigma_\cH$ does not depend on this choice.
\end{remark}
\begin{remark}
This partition of $H^{-1}(1)$ is non-trivial as soon as $d_\omega>1$. Therefore, Theorem \ref{non-perturbed-eigenfunctions} implies that $\mathcal{M}(\widehat{H}_\hbar)$ is not a convex set, unless $d_\omega=1$. In other words:
$$
\mathcal{M}(\widehat{H}_\hbar)=\mathcal{M}(H) \quad \iff \quad \phi^H_t \text{ is periodic }.
$$
\end{remark}

The proof of Theorem \ref{non-perturbed-eigenfunctions} follows from an argument on the propagation of wave-packets which is inspired in \cite{DBievre93} (see Lemma \ref{concentration_minimal_set} in Section \ref{weak_limits_eigenfunctions}).   In Section \ref{proof} we show how to conclude the proof of the theorem using the decomposition \eqref{e:decper}.

The particular case of an isotropic harmonic oscillator, \textit{i.e.} with $\omega=(1,\hdots,1)$, was first analysed in \cite{Arnaiz_tesis}. See also \cite{Studnia19} for a different proof which is based on the invariance by the metaplectic representation of the unitary group in $\IC^{2d}$ (this strategy of proof can only be implemented in the isotropic case)  and the results in \cite{Ojeda_Villegas10}, that show scarring of eigenfunctions on periodic orbits. It is interesting to note that our approach here is necessarily very different, since the proofs we just mentioned cannot be carried out even in the case of a periodic anisotropic harmonic oscillator. 
 
\subsection*{Acknowledgments}
The authors would like to thank Stéphane Nonnenmacher, Gabriel Rivière, and Carlos Villegas-Blas for fruitful discussions on semiclassical harmonic oscillators.  We are also grateful to Yves Colin de Verdière for pointing out a gap in a previous version of the article. Victor Arnaiz has been supported by a predoctoral grant from Fundación La Caixa - Severo Ochoa International Ph.D. Program at the Ins\-tituto de Ciencias Matemáticas (ICMAT-CSIC-UAM-UC3M-UCM), and is currently supported by the European Research Council (ERC) under the European Union's Horizon 2020 research and innovation programme (grant agreement No. 725967). Both authors have been partially supported by grant MTM2017-85934-C3-3-P (MINECO, Spain).

\section{Scarring on minimal invariant tori}
\label{weak_limits_eigenfunctions}

We first show how to construct sequences of joint eigenfunctions of the periodic Hamiltonians $\cH_n$ that concentrate on minimal invariant tori. Let $T_n:=T_{v_n\nu_n}$ denote the period of $\phi_t^{\cH_n}$ as defined in Remark \ref{r:period}. Write $\mathcal{R}_\omega := [0,T_1) \times \cdots \times [0,T_{d_\omega})$, and let $\vert \mathcal{R}_\omega \vert  := T_1 \cdots T_{d_\omega}$. Define
\[
\widehat{\cH}_\hbar := (\Op_\hbar(\cH_1),\hdots,\Op_\hbar(\cH_{d_\omega})),
\]
with:
\[
\Op_\hbar(\cH_n)=v_n\sum_{j=1}^d \nu_{n,j}( - \hbar^2 \partial_{x_j}^2 +  x_j^2 ),\quad n=1,\hdots,d_\omega.
\]
The following result precises remainders and extends to harmonic oscillators with general frequency vectors \cite{DBievre93} and \cite[Proposition 5]{Ojeda_Villegas10}.
\medskip

\begin{lemma}
\label{concentration_minimal_set}
For every $E\in\Sigma_\cH$ there exist $\hbar_0 > 0$ and a sequence $(\Lambda_\hbar)_{0<\hbar\leq \hbar_0}$ in $\IR^{d_\omega}$ with $\Lambda_\hbar\to E$ as $\hbar \to 0^+$ satisfying the following. For every $z_0 \in \cH^{-1}(E)$ there exists $(\Psi_{\hbar} )_{0<\hbar\leq \hbar_0}$ such that
\[
\widehat{\cH}_\hbar\, \Psi_\hbar = \Lambda_\hbar \, \Psi_\hbar,\quad \Vert \Psi_\hbar \Vert_{L^2(\R^d)} = 1,
\]
and, for every $a\in\cC_c^\infty(\R^{2d})$,
\begin{equation}
\label{e:best_concentration}
\big \langle \Psi_{\hbar}, \Op_{\hbar}(a) \Psi_{\hbar} \big \rangle_{L^2(\R^d)} = \frac{1}{\vert \mathcal{R}_\omega \vert }\int_{\mathcal{R}_\omega}  a\big( \phi^{\mathcal{H}_1}_{t_1} \circ \cdots \circ \phi_{t_{d_\omega}}^{\mathcal{H}_{d_\omega}}(z_0) \big)dt_1 \cdots dt_{d_\omega}  + O(\hbar^{1/2}),\quad \hbar\to 0^+.
\end{equation}
\end{lemma}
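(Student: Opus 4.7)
The plan is to build $\Psi_\hbar$ as the normalized image of a Gaussian coherent state centered at $z_0$ under the spectral projector onto a joint eigenspace of $\widehat{\cH}_\hbar$, in the spirit of the wave-packet approach of \cite{DBievre93}. The structural fact to exploit is that, because each $\cH_n$ is a quadratic form whose classical flow $\phi^{\cH_n}_t$ is $T_n$-periodic, the quantum propagator $U_n(t):=e^{-it\Op_\hbar(\cH_n)/\hbar}$ satisfies $U_n(T_n)=c_n\,\mathrm{Id}$ for an explicit scalar phase $c_n$. This makes the joint spectrum of $\widehat{\cH}_\hbar$ completely explicit and allows one to write the projector onto a joint eigenspace as a finite integral over $\mathcal{R}_\omega$ of the commuting product $U(t):=U_1(t_1)\cdots U_{d_\omega}(t_{d_\omega})$.

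First, since by definition of $\Sigma_\cH$ one can write $E_n = v_n\nu_n\cdot I$ for some $I\in\R^d$ with $I_j\geq 0$ and $\omega\cdot I = 1$, I pick $k_\hbar\in\N^d$ with $2\hbar k_\hbar\to I$ and set
\[
\Lambda_{\hbar,n}:=\hbar\,v_n\sum_{j=1}^d \nu_{n,j}(2k_{\hbar,j}+1),\qquad n=1,\ldots,d_\omega,
\]
which lies in the joint spectrum of $\widehat{\cH}_\hbar$, tends to $E$, and depends only on $E$. Then, for each $z_0\in\cH^{-1}(E)$, let $\varphi_\hbar^{z_0}$ be the standard Gaussian coherent state centered at $z_0$ and define
\[
\Phi_\hbar := \int_{\mathcal{R}_\omega} e^{i\Lambda_\hbar\cdot t/\hbar}\,U(t)\,\varphi_\hbar^{z_0}\,dt.
\]
Integrating by parts in each variable $t_n$ and using that the quantization condition on $\Lambda_{\hbar,n}$ forces $e^{iT_n\Lambda_{\hbar,n}/\hbar}c_n=1$, hence makes the boundary contributions at $t_n=0$ and $t_n=T_n$ cancel, yields $\Op_\hbar(\cH_n)\Phi_\hbar=\Lambda_{\hbar,n}\Phi_\hbar$ for every $n$, so $\Psi_\hbar:=\Phi_\hbar/\|\Phi_\hbar\|$ is a normalized joint eigenfunction with eigenvalue $\Lambda_\hbar$.

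The remaining task is to establish \eqref{e:best_concentration}. Using unitarity of $U(t)$ together with the exact Egorov identity $U(t)^*\Op_\hbar(a)U(t)=\Op_\hbar(a\circ\Phi_t^\cH)$, valid because each $\cH_n$ is quadratic, both $\|\Phi_\hbar\|^2$ and $\langle\Phi_\hbar,\Op_\hbar(a)\Phi_\hbar\rangle$ reduce to integrals over $\mathcal{R}_\omega$ of coherent-state overlaps of the form $\langle\varphi_\hbar^{z_0},U(u)\varphi_\hbar^{z_0}\rangle$, possibly with an $\Op_\hbar(a\circ\Phi_s^\cH)$ inserted, where $\Phi_t^\cH:=\phi^{\cH_1}_{t_1}\circ\cdots\circ\phi^{\cH_{d_\omega}}_{t_{d_\omega}}$. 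Because $U(u)$ factorizes as a tensor product of one-dimensional harmonic-oscillator propagators rotating the planes $(x_j,\xi_j)$ through angles $\theta_j(u)=2\sum_n v_n\nu_{n,j}u_n$, these overlaps are explicit: their modulus is a Gaussian of width $\sqrt{\hbar}$ sharply peaked at the discrete set of return times $u\in\mathcal{R}_\omega$ with $\Phi_u^\cH(z_0)=z_0$. The main obstacle is to extract the leading asymptotics of these oscillatory Gaussian integrals and verify that the ratio produces $\frac{1}{|\mathcal{R}_\omega|}\int_{\mathcal{R}_\omega}a(\Phi_t^\cH(z_0))\,dt + O(\hbar^{1/2})$; this is done by a Laplace/stationary-phase expansion around each return point, combined with non-stationary phase control on the complementary region. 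Degenerate directions, where $H_j(z_0)=0$, require separate treatment since the corresponding overlap becomes a pure phase rather than a Gaussian, but then the orbit $\Phi_t^\cH(z_0)$ remains in the zero section of that $H_j$ and the averaging still matches on both sides of \eqref{e:best_concentration}.
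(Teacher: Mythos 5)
Your construction is exactly the one in the paper: you form the time-averaged coherent state $\int_{\mathcal{R}_\omega} e^{i\Lambda_\hbar\cdot t/\hbar}\,U(t)\,\varphi_\hbar^{z_0}\,dt$, use periodicity of $U_n$ to get a joint eigenfunction, and analyze the diagonal matrix elements via the explicit action of the propagator on coherent states plus a Gaussian stationary-phase argument. The only substantive difference is cosmetic: you fix the joint eigenvalue $\Lambda_\hbar$ by choosing an action $I$ with $\cH$-image $E$ and rounding $I/(2\hbar)$ to integers, whereas the paper rounds the eigenvalue directly using a Frobenius-number argument to guarantee it is actually in the spectrum for all small $\hbar$; both routes work.

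The one place where your sketch falls short of a proof is the degenerate case, and your description of it is not quite right. The relevant degeneracy is not ``$H_j(z_0)=0$ for some $j$'' per se, but $\rk\,d\cH_{z_0}=d_0<d_\omega$, which occurs when enough $H_j(z_0)$ vanish that the truncated frequency vectors $\pi_{z_0}(v_n\nu_n)$ become linearly dependent. In that situation the multi-flow $\tau\mapsto\Phi_{z_0}(\tau)$ is not an immersion on $\mathcal{R}_\omega$: the stationary set of the phase is a positive-dimensional subtorus rather than a discrete lattice, so the Laplace expansion ``around each return point'' does not apply as stated, and the normalization constant scales as $\hbar^{-d_0/2}$ rather than $\hbar^{-d_\omega/2}$. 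Saying ``the averaging still matches on both sides'' is precisely the claim that needs a proof here. The paper's resolution is to pass to a reduced family $\widetilde{\cH}_1,\dots,\widetilde{\cH}_{d_0}$ whose differentials are independent at $z_0$ (and a correspondingly smaller rectangle $\widetilde{\mathcal{R}}_\omega$), check that the time-average over $\widetilde{\mathcal{R}}_\omega$ produces the same eigenfunction up to a constant, and then redo the stationary-phase computation in the reduced variables. If you flesh out that reduction, your argument becomes a complete proof along the paper's lines.
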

\begin{proof}
Let $\nu(n) := v_n (\nu_{n,1}+ \ldots +  \nu_{n,d})$, we have the following explicit expression for the spectrum of each $\Op_\hbar(\mathcal{H}_n)$:
\begin{equation}
\label{e:spectrum}
\operatorname{Sp}\big( \Op_\hbar(\mathcal{H}_n )\big) = \left \{ \lambda^n_{k,\hbar} = v_n\cL_{\nu_n}(\hbar k)+\frac{\hbar \nu(n)}{2}\,:\, k \in\Z^d_+ \right \},\quad n=1,\hdots,d_\omega.
\end{equation}
The fact that each flow $\phi^{\mathcal{H}_n}_t$ is periodic implies that the spacing between consecutive (and large enough) eigenvalues of $\Op_\hbar(\mathcal{H}_n)$ is constant. If $E_n\neq 0$, let $\sigma_n := \operatorname{sign} E_n$. We claim that there exists an integer $N_n>0$ such that
\begin{equation}
\label{e:eigenvalue}
\lambda^n_{\hbar,N}:=\hbar\left(\frac{2\pi \sigma_n}{T_n}  N+\frac{\nu(n)}{2}\right),\quad N\in \mathbb{N},\,N\geq N_n,
\end{equation}
are precisely the eigenvalues of $\Op_\hbar(\mathcal{H}_n)$ that are greater or equal (if $\sigma_n \geq 0$), or less or equal (if $\sigma_n < 0$) to $\lambda^n_{\hbar,N_n}$. 

To see this, recall  (see Remark \ref{r:period}) that there exists a least positive integer $K_n$ such that 
$k(n):= K_n [\nu_n]^{-1}\nu_{n}  \in \IZ^d$ 
(the period of $\phi^{\mathcal{H}_n}_t$ is then $T_n =\frac{ 2\pi K_n}{|v_n| [\nu_{n}]} $). The components of $k(n)$ are integers that are relatively prime. Therefore, there exists $N_n\in\N$ such that, for every integer $N\geq N_n$, there exists $k\in\IZ^d_+$ such that $k\cdot k(n)= \sigma_n N$ (the smallest $N_n$ with this property is called the Frobenius number of the family $\{ k_j(n)\}_{j\in\{1,\hdots,d\}}$). The claim then follows from  \eqref{e:spectrum}.

With this in mind, we set for $n =1, \ldots, d_\omega$:
\begin{align*}
N(\hbar,n) & :=\left\lceil \frac{T_n}{2\pi}\left(\frac{E_n}{\hbar} -\frac{\nu(n)}{2}\right)\right\rceil,\text{ if }E_n\neq 0, \\[0.2cm]
N(\hbar,n) & :=0,\text{ if }E_n= 0,
\end{align*}
and, choosing $\hbar_0>0$ in order to have  $N(\hbar,n)\geq N_n$, for every $n=1,\hdots,d_\omega$, we write, for $0<\hbar<\hbar_0$: 
\begin{equation}\label{e:defeigv}
\lambda^n_\hbar:=\lambda^n_{\hbar,N(\hbar,n)}=\hbar\left(\frac{2\pi \sigma_n}{T_n}N(\hbar,n)+\frac{\nu(n)}{2}\right),\quad \text{ which satisfies } |E_n-\lambda^n_\hbar|< \frac{2\pi}{T_n}\hbar.
\end{equation}
We now write
\[
\Psi_0^\hbar(x):=\frac{1}{(\pi\hbar)^{d/4}}e^{-\frac{|x|^2}{2\hbar}},
\]
and, provided $z_0=(x_0,\xi_0)$, define the coherent state
\begin{equation}\label{e:coherent_state}
\Psi^\hbar_{z_0}(x):= e^{-\frac{i \xi_0\cdot x_0 }{2\hbar}}e^{\frac{i \xi_0\cdot x}{\hbar}}\Psi_0^\hbar(x-x_0).
\end{equation}
Generalizing the idea of \cite{DBievre93}, consider the time average of $\Psi_{z_0}^\hbar$: let $\Lambda_\hbar := (\lambda_\hbar^1, \ldots, \lambda_\hbar^{d_\omega})$, and  define
\begin{equation}
\label{e:non_normalized_candidate}
\langle\Psi^\hbar_{z_0}\rangle := \frac{1}{\vert \mathcal{R}_\omega \vert } \int_{\mathcal{R}_\omega} e^{i \frac{ \tau \cdot \Lambda_\hbar}{\hbar}} e^{-i \frac{\tau \cdot \widehat{\mathcal{H}}_\hbar }{\hbar}} \Psi^\hbar_{z_0}\,d\tau.
\end{equation}
Since $e^{-i \frac{T_n}{\hbar}\Op_\hbar(\mathcal{H}_n)}=e^{-i\frac{T_n \nu(n)}{2}}\operatorname{Id}$, it follows that 
\[
\widehat{\cH}_\hbar \langle\Psi^\hbar_{z_0}\rangle = \Lambda_\hbar \langle\Psi^\hbar_{z_0}\rangle.
\]
On the other hand, it is known (see for instance \cite[Sect. 23.4.2]{Woit_17} or \cite[Chpt. 3]{Robert12}) that
\begin{equation}
e^{-i \frac{ t\Op_\hbar(\mathcal{H}_n)}{\hbar}} \Psi^\hbar_{z_0} = e^{-i \frac{t \nu(n)}{2}} \Psi^\hbar_{\phi_t^{\mathcal{H}_n}(z_0)};
\end{equation}
which allows us to write the time-averaged coherent state solely in terms of the classical dynamics. Defining 
\begin{equation}\label{e:multf}
\Phi_{z_0}(\tau) := \phi_{t_1}^{\mathcal{H}_1} \circ \cdots \circ \phi_{t_{d_\omega}}^{\mathcal{H}_{d_\omega}}, 
\end{equation}
and denoting
\begin{align*}
\Theta_\hbar := 2\pi \left( \frac{N(\hbar,1)}{T_1}, \ldots, \frac{N(\hbar, d_\omega)}{T_{d_\omega}} \right),
\end{align*} 
we have
\[
\langle\Psi^\hbar_{z_0}\rangle := \frac{1}{\vert \mathcal{R}_\omega \vert } \int_{\mathcal{R}_\omega} e^{i \tau \cdot \Theta_\hbar} \Psi^\hbar_{\Phi_{z_0}(\tau)}\,d\tau.
\]
As we will see now, the eigenstates $\langle\Psi^\hbar_{z_0}\rangle$ have yet to be normalized (except of course when $z_0=0$, for which $\langle\Psi^\hbar_{z_0}\rangle=\Psi^\hbar_{z_0}$). The rest of the proof is devoted to analysing, for any $a\in\cC^\infty(\R^{2d})$ such that all its derivatives are bounded, the behavior as $\hbar\to 0^+$ of the oscillatory integral:
\begin{equation}\label{e:mesav}
\big \langle \langle\Psi^\hbar_{z_0}\rangle, \Op_{\hbar}(a) \langle\Psi^\hbar_{z_0}\rangle \big \rangle_{L^2(\R^d)} 
=\int_{\mathcal{R}_\omega \times \mathcal{R}_\omega }e^{i (\tau - \tau') \cdot \Theta_\hbar} 
\big \langle  \Psi^\hbar_{\Phi_{z_0}(\tau')}, \Op_{\hbar}(a)  \Psi^\hbar_{\Phi_{z_0}(\tau)}\big \rangle_{L^2(\R^d)}
\frac{d\tau \, d \tau'}{\vert \mathcal{R}_\omega \vert^2}.
\end{equation}
We next give a stationary phase-type argument that is well suited to the particular structure of this integral.

We first assume that $\rk d \mathcal{H}_{z_0} = d_\omega$; therefore $\Phi_{z_0}|_{\cR_\omega}$ is injective. Start by noting that the inner product in \eqref{e:mesav}, is the cross-Wigner distribution of two phase-space translates of the same Gaussian $\Psi^\hbar_0$ acting on the test function $a$. Define:
\[
\varphi(\tau,\tau',z):=-\frac{\sigma(\Phi_{z_0}(\tau),\Phi_{z_0}(\tau'))}{2}+\sigma(\Phi_{z_0}(\tau)-\Phi_{z_0}(\tau'),z),
\] 
where $\sigma(\cdot,\cdot)$ is the canonical symplectic form in $\R^{2d}$. 
A direct computation then shows:
\begin{equation}
\label{e:cross_with_translation}
\big \langle  \Psi^\hbar_{\Phi_{z_0}(\tau')}, \Op_{\hbar}(a)  \Psi^\hbar_{\Phi_{z_0}(\tau)}\big \rangle_{L^2(\R^d)}=\int_{\R^{2d}}a(z) e^{i\frac{\varphi(\tau,\tau',z)}{\hbar}} W^\hbar_{\Psi^\hbar_0}\left(z-\frac{\Phi_{z_0}(\tau)+\Phi_{z_0}(\tau')}{2}\right)\,dz,
\end{equation}
where $W^\hbar_{\Psi^\hbar_0}$, the Wigner distribution of $\Psi^\hbar_0$, is:
\[
W^\hbar_{\Psi^\hbar_0}(z)=\frac{1}{\hbar^d}G\left(\frac{z}{\sqrt{\hbar}}\right),\quad G(z):=\frac{1}{\pi^d}e^{-|z|^2}.
\]
Inserting this into \eqref{e:mesav} gives:
\begin{equation}\label{e:mesav2}
\int_{\mathcal{R}_\omega \times \mathcal{R}_\omega}\int_{\R^{2d}}e^{i (\tau - \tau') \cdot \Theta_\hbar} e^{i\frac{\varphi(\tau', \tau,-\sqrt{\hbar}z)}{\hbar}} a\left(\frac{\Phi_{z_0}(\tau)+\Phi_{z_0}(\tau')}{2}+\sqrt{\hbar}z\right)G(z)\,dz\frac{d\tau' \, d\tau}{\vert \mathcal{R}_\omega \vert^2}.
\end{equation}
In order to simplify this expression, note first that, since the multi-flow $\Phi_{z_0}(\tau)$ is linear in $z_0$ and Hamiltonian:
\begin{equation}\label{e:symp1}
\frac{\sigma(\Phi_{z_0}(\tau),\Phi_{z_0}(\tau'))}{2}=\frac{\sigma(\Phi_{z_0}(\tau - \tau'),z_0)}{2}=-(\tau - \tau') \cdot \mathcal{H}(z_0)+ g(\tau - \tau'),
\end{equation}
where $g$ is analytic and $g(\tau) = O(\vert \tau \vert^3)$ as $\vert \tau \vert \to 0$. 
Second, by \eqref{e:defeigv}, we have
\begin{equation}\label{e:eigenvapp}
m_\hbar^n:=\frac{2\pi  \sigma_n N(\hbar,n)}{T_n}-\frac{E_n}{\hbar}=\frac{\lambda_\hbar^n-E_n}{\hbar}-\frac{\nu(n)}{2},\quad |m_\hbar^n|\leq \frac{2\pi}{T_n}+\frac{\nu(n)}{2}.
\end{equation} 
Denoting $m_\hbar = (m_\hbar^1, \ldots, m_\hbar^{d_\omega})$, performing a change of variables, and using \eqref{e:symp1}, \eqref{e:eigenvapp} we conclude that  \eqref{e:mesav2} is equal to:
\begin{equation}\label{e:mesav3}
\int_{\mathcal{R}_\omega \times \mathcal{R}_\omega}\int_{\R^{2d}}e^{i \tau \cdot m_\hbar} e^{i\frac{g(\tau)}{\hbar}} e^{\frac{i}{\hbar}\sigma\left(\Phi_{z_0}(\tau + \tau') -\Phi_{z_0}(\tau') ,z \right)}  a\left(\frac{\Phi_{z_0}(\tau+\tau')+\Phi_{z_0}(\tau')}{2}+\sqrt{\hbar}z\right)G(z)\,dz\frac{d\tau' \, d\tau}{|\cR_\omega|^2}.
\end{equation}
Now we use that:
\[
\sigma\left(\Phi_{z_0}(\tau + \tau') -\Phi_{z_0}(\tau') ,z \right)=- \tau \cdot d \mathcal{H}_{\Phi_{z_0}(\tau')}(z) + r(\tau,\tau') \cdot z,
\]
where $r$ is analytic in both arguments and $r(\tau,\tau') = O(\vert \tau \vert^2)$, and that:
\[
e^{ir(\tau,\tau')\cdot \sqrt{\hbar}z}a\left(\frac{\Phi_{z_0}(\tau + \tau')+\Phi_{z_0}(\tau')}{2}+\sqrt{\hbar}z\right)=a\left(\frac{\Phi_{z_0}(\tau + \tau')+\Phi_{z_0}(\tau')}{2}\right)+O(\sqrt{\hbar})R_\hbar,
\]
with a remainder such that $R_\hbar G$ is uniformly bounded in $\cS(\R^{2d})$ with respect to $\hbar,\tau,\tau'$.
Then, denoting
$$
\mathcal{R}_\omega^\hbar := \left[ 0, \frac{T_1}{\hbar} \right) \times \cdots \times \left[0, \frac{T_{d_\omega}}{\hbar} \right),
$$ 
\eqref{e:mesav3} can be transformed in:
\begin{align*}
\hbar^{\frac{d_\omega}{2}} \int_{\mathcal{R}_\omega^\hbar} \int_{\mathcal{R}_\omega} \int_{\R^{2d}}e^{i \sqrt{\hbar}( \tau \cdot m_\hbar+g(\tau))}  &e^{-i\tau d\mathcal{H}_{\Phi_{z_0}(\tau')}(z)}  \\ & a\left(\frac{\Phi_{z_0}(\tau' + \sqrt{\hbar} \tau) + \Phi_{z_0}(\tau')}{2}\right)G(z)\,dz\frac{d\tau' \, d\tau}{ \vert \mathcal{R}_\omega \vert^2}+O(\hbar)=
\end{align*}
\begin{equation}\label{e:mesav4}
\frac{1}{\vert \mathcal{R}_\omega \vert}\int_{\mathcal{R}_\omega} \hbar^{\frac{d_\omega}{2}} \int_{\mathcal{R}_\omega^\hbar} e^{i \sqrt{\hbar}(\tau \cdot m_\hbar+g(\tau))}  a\left(\frac{\Phi_{z_0}(\tau' + \sqrt{\hbar} \tau) + \Phi_{z_0}(\tau')}{2}\right)\widehat{G}([ d\mathcal{H}_{\Phi_{z_0}(\tau')}]^T \tau)\frac{d\tau}{\vert \mathcal{R}_\omega \vert}d\tau' +O(\hbar).
\end{equation}
At this point, observe that $\widehat{G}=\pi^dG^{1/4}$ and that 
$$
(d\mathcal{H}_{\Phi_{z_0}(\tau')}) (d\mathcal{H}_{\Phi_{z_0}(\tau')})^T= (d\mathcal{H}_{z_0})(d\mathcal{H}_{z_0})^T =: \mathcal{G}_{z_0},
$$ 
for every $\tau' \in \mathcal{R}_\omega$. This implies:
\begin{align*}
 \int_{\mathcal{R}_\omega^\hbar} e^{i \sqrt{\hbar}(\tau \cdot m_\hbar+g(\tau))}  a\left(\frac{\Phi_{z_0}(\tau' + \sqrt{\hbar} \tau) + \Phi_{z_0}(\tau')}{2}\right)& \widehat{G}\big( [d\mathcal{H}_{\Phi_{z_0}(\tau')}]^T \tau \big)\frac{d\tau}{\vert \mathcal{R}_\omega \vert}=\\&a(\Phi_{z_0}(\tau'))\int_{\R_+^{d_\omega}}  e^{-\frac{ \vert (d\mathcal{H}_{z_0})^T \tau \vert^2}{4}}\frac{d\tau }{\vert \mathcal{R}_\omega \vert } +O(\sqrt{\hbar}).
\end{align*}
Inserting this identity back in \eqref{e:mesav4}, we conclude that, taking $a=1$:
\[
c_\hbar:=\frac{\vert \mathcal{R}_\omega\vert \sqrt{\det \mathcal{G}_{z_0}}}{(\pi\hbar)^{\frac{d_\omega}{2}}}\|\langle \Psi^\hbar_{z_0}\rangle\|_{L^2(\R^d)}^2\To 1,\quad \hbar\to 0^+.
\]
and that 
\[
\Psi_\hbar :=\left(\frac{\vert \mathcal{R}_\omega\vert \sqrt{\det \mathcal{G}_{z_0}}}{c_\hbar (\pi\hbar)^{\frac{d_\omega}{2}}}\right)^{1/2}\langle \Psi^\hbar_{z_0}\rangle
\]
are the desired normalized eigenstates satisfying \eqref{e:best_concentration}. 

We now turn to the case $1 \leq \rk d \mathcal{H}_{z_0} = d_0 < d_\omega$ . The argument is very similar, but the normalization constant turns out to be different. We define, for any $u \in \R^d$,
$$
\pi_{z_0}(u) = \big( \mathbf{1}_{(0,\infty)}(H_1(z_0))u_1, \ldots, \mathbf{1}_{(0,\infty)}(H_d(z_0))u_d \big),
$$
Since $\pi_{z_0}(\omega)=\sum_{j=1}^{d_\omega}v_n\pi_{z_0}(\nu_n)$ we have $d_0 = \dim S_{\pi_{z_0}(\omega)}$. Modulo permutation of the indexes, for every $ d_0 +1 \leq n \leq d_\omega$, there exists $b_n \in \mathbb{Q}^{d_0}$ such that
$$
\pi_{z_0}(\nu_n) = \sum_{l=1}^{d_0} b_{n,l} \pi_{z_0}(\nu_l).
$$
Then $\pi_{z_0}(\omega) = \widetilde{v}_1 \pi_{z_0}(\nu_1) + \cdots + \widetilde{v}_{d_0} \pi_{z_0}(\nu_{d_0})$, where
$$
\widetilde{v}_l := v_l + \sum_{n=d_0+1}^{d_\omega} b_{n ,l} v_n,\quad l= 1, \ldots, d_0.
$$
We next define, for $1 \leq l \leq d_0$, $\widetilde{\mathcal{H}}_l := \widetilde{v}_l \pi_{z_0}(\nu_l) \cdot (H_1, \ldots, H_d)$ and write $\widetilde{\mathcal{H}} = (\widetilde{\mathcal{H}}_1, \ldots, \widetilde{\mathcal{H}}_{d_0})$. Notice that, for $1 \leq n \leq d_0$,
\begin{align}
\label{e:implicit_1}
\mathcal{H}_n(z_0) & = \frac{v_n}{\widetilde{v}_n} \widetilde{\mathcal{H}}_n(z_0),  
\end{align}
while, for $d_0 + 1 \leq n \leq d_\omega$,
\begin{align}
\label{e:implicit_2}
\mathcal{H}_n(z_0) & 
  = v_n \sum_{l=1}^{d_0} \frac{b_{n,l}}{\widetilde{v}_l} \widetilde{\mathcal{H}}_l(z_0).
\end{align} 
Then, under our assumptions, 
$$
\rk d \widetilde{\mathcal{H}}_{z_0} = \rk d \mathcal{H}_{z_0} = d_0.
$$ 
We now define the rectangle $\widetilde{R}_\omega := [0, \widetilde{T}_1) \times \cdots \times [0,\widetilde{T}_{d_0})$, where
$$
\widetilde{T}_l := \frac{2\pi \widetilde{K}_l}{\vert \widetilde{v}_l \vert [\pi_{z_0}(\nu_l)]},
$$
and $\widetilde{K}_l$ is the least positive integer such that $\widetilde{k}(l) = \widetilde{K}_l [ \pi_{z_0}(\nu_l)]^{-1} \pi_{z_0}( \nu_l )\in \mathbb{Z}^d$. Let us  define also $\widetilde{\nu}(l) := \widetilde{v}_l( \mathbf{1}_{(0,\infty)}(H_1(z_0)) \nu_{l,1} + \cdots + \mathbf{1}_{(0,\infty)}(H_d(z_0)) \nu_{l,d})$, and, setting $\widetilde{E}_l = \widetilde{\mathcal{H}}_l(z_0)$, let
\begin{align*}
\widetilde{N}(\hbar,l) & := \left \lceil \frac{\widetilde{T}_l}{2 \pi} \left( \frac{\widetilde{E}_l}{\hbar} - \frac{\widetilde{\nu}(l)}{2}\right) \right \rceil, \quad \text{if } \widetilde{E}_l \neq 0, \\[0.2cm]
\widetilde{N}(\hbar,l) & := 0, \quad \text{if } \widetilde{E}_l = 0.
\end{align*}
For $1 \leq l \leq d_0$, we define
$$
\widetilde{\lambda}_{\hbar}^l := \hbar\left(\frac{2\pi \sigma_l}{\widetilde{T}_l}\widetilde{N}(\hbar,l)+\frac{\widetilde{\nu}(l)}{2} \right).
$$
By construction, in correspondence with \eqref{e:implicit_1} and \eqref{e:implicit_2}, we have
\begin{align*}
\lambda_{\hbar}^n & = \frac{v_n}{\widetilde{v}_n} \widetilde{\lambda}_\hbar^n, \quad 1 \leq n \leq d_0, \\[0.2cm]
\lambda_{\hbar}^n & = v_n \sum_{l=1}^{d_0} \frac{b_{n,l}}{\widetilde{v}_l} \widetilde{\lambda}_\hbar^l, \quad d_0 + 1 \leq n \leq d_\omega.
\end{align*}
Taking $\widetilde{\Lambda}_\hbar = (\widetilde{\lambda}_\hbar^1, \ldots , \widetilde{\lambda}_\hbar^{d_0})$, this implies that the averaged coherent-state given by
$$
\langle\Psi^\hbar_{z_0}\rangle := \frac{1}{\vert \widetilde{\mathcal{R}}_\omega \vert } \int_{\widetilde{\mathcal{R}}_\omega} e^{i \frac{ \tau \cdot \widetilde{\Lambda}_\hbar}{\hbar}} e^{-i \frac{\tau \cdot \Op_\hbar(\widetilde{\mathcal{H}}) }{\hbar}} \Psi^\hbar_{z_0}\,d\tau
$$
coincides with \eqref{e:non_normalized_candidate} up to multiplication by a constant. In particular,
$$
\widehat{\mathcal{H}}_\hbar \langle\Psi^\hbar_{z_0}\rangle = \Lambda_\hbar \langle\Psi^\hbar_{z_0}\rangle.
$$
The rest of the proof mimics the one before, with normalized states replaced by
$$
\Psi_\hbar :=\left(\frac{\vert \widetilde{\mathcal{R}}_\omega\vert \sqrt{\det \widetilde{\mathcal{G}}_{z_0}}}{c_\hbar (\pi\hbar)^{\frac{d_0}{2}}}\right)^{1/2}\langle \Psi^\hbar_{z_0}\rangle,
$$
where $\widetilde{\mathcal{G}}_{z_0} = (d\widetilde{\mathcal{H}}_{z_0})(d\widetilde{\mathcal{H}}_{z_0})^T$, and now the normalizing constant $c_\hbar$ is replaced by
$$
c_\hbar:=\frac{\vert \widetilde{\mathcal{R}}_\omega\vert \sqrt{\det \widetilde{\mathcal{G}}_{z_0}}}{(\pi\hbar)^{\frac{d_0}{2}}}\|\langle \Psi^\hbar_{z_0}\rangle\|_{L^2(\R^d)}^2,
$$
Finally, for any $a \in \mathcal{C}_c^\infty(\R^{2d})$, we obtain
$$
\big \langle \Psi_{\hbar}, \Op_{\hbar}(a) \Psi_{\hbar} \big \rangle_{L^2(\R^d)} = \frac{1}{\vert \widetilde{\mathcal{R}}_\omega \vert }\int_{\widetilde{\mathcal{R}}_\omega}  a\big( \phi^{\tilde{\mathcal{H}}_1}_{t_1} \circ \cdots \circ \phi_{t_{d_0}}^{\tilde{\mathcal{H}}_{d_0}}(z_0) \big)dt_1 \cdots dt_{d_0}  + O(\hbar^{1/2}),\quad \hbar\to 0^+.
$$
By construction, we also have that 
$$
\frac{1}{\vert \widetilde{\mathcal{R}}_\omega \vert }\int_{\widetilde{\mathcal{R}}_\omega}  a\big( \phi^{\tilde{\mathcal{H}}_1}_{t_1} \circ \cdots \circ \phi_{t_{d_0}}^{\tilde{\mathcal{H}}_{d_0}}(z_0) \big)dt_1 \cdots dt_{d_0} = \frac{1}{\vert \mathcal{R}_\omega \vert }\int_{\mathcal{R}_\omega}  a \circ \Phi_{z_0}(\tau) d\tau.
$$
This completes the proof of the lemma.

\end{proof}

\section{Proof of Theorem \ref{non-perturbed-eigenfunctions}} 
\label{proof}


Let us show that for any choice of $E\in\Sigma_\cH$ one has that 
\[
\cM_{E}(\cH):=\{\mu\in\cM(H)\,:\,\supp \mu\subseteq \cH^{-1}(E)\}\subseteq \cM(\widehat{H}_\hbar).
\]
To this aim, note that since $\cM_{E}(\cH)$ is convex and compact for the weak-$\star$ topology, the Krein-Milman theorem ensures that $\cM_{E}(\cH)$ is the closure of the convex hull of the set of orbit measures $\delta_{\mathcal{T}_\omega(z_0)}$, where
\begin{equation}\label{e:atom}
\mathcal{T}_\omega(z_0) := \{ \Phi_{z_0}(\tau) \in \mathcal{H}^{-1}(E) \, : \, \tau \in \mathcal{R}_\omega \}, \quad z_0 \in \mathcal{H}^{-1}(E).
\end{equation}
Therefore, it suffices to show that any measure $\mu$ that is a convex combination of distinct orbit measures $\delta_{\cT_1},\hdots,\delta_{\cT_r}$, with $\cT_j:=\cT_\omega(z_j)$, $z_j\in\cH^{-1}(E)$, belongs to $\cM(\widehat{H}_\hbar)$. 
Apply Lemma \ref{concentration_minimal_set} to  ensure the existence of a sequence of eigenvalues $(\lambda_\hbar^1,\hdots,\lambda^{d_\omega}_\hbar) \to E$ as $\hbar\to 0^+$, and normalized eigenfunctions $\psi_\hbar^j \in L^2(\R^d)$, $j=1,\hdots,r$ such that, 
\[
\widehat{H}_\hbar\psi_\hbar^j = \lambda_\hbar \psi_\hbar^j, \quad \lambda_\hbar:=\lambda_\hbar^1+\hdots+\lambda^{d_\omega}_\hbar\to 1,\;\text{ as }\hbar\to 0^+,
\]
and, for every $a\in\cC^{\infty}_c(\R^{2d})$,
\[
\lim_{\hbar\to 0^+}\big \langle \psi_\hbar^j, \Op_{\hbar}(a) \psi_\hbar^j \big \rangle_{L^2(\R^d)}=\int_{\R^{2d}}a(x,\xi)\, \delta_{\mathcal{T}_j} (dx,d\xi).
\]
Suppose that $\mu=\sum_{j=1}^r\alpha_j\delta_{\cT_j}$ with $\alpha_j\in(0,1)$; since the measures $\delta_{\cT_j}$ are mutually singular, it follows (see for instance \cite[Proposition 3.3]{GerardMesuresSemi91}) that 
\[
\lim_{\hbar\to 0^+}\big \langle \psi_\hbar^j, \Op_{\hbar}(a) \psi_\hbar^k \big \rangle_{L^2(\R^d)} =0,\quad j\neq k.
\]
This shows that the sequence of eigenfunctions $\psi_\hbar:=\sum_{j=1}^r\sqrt{\alpha_j}\psi_\hbar^j$ has $\mu$ as a semiclassical measure. Even though the sequence $(\psi_\hbar)$ is not normalized, its norm tends to one so one concludes that $\mu\in\cM(\widehat{H}_\hbar)$.

Conversely, suppose that $\mu\in\cM(\widehat{H}_\hbar)$; let us show that there exists a $E\in\Sigma_\cH$ such that $\mu\in\cM_E(\cH)$. Suppose that $\mu$ is obtained as the weak-$\star$ limit of the sequence of Wigner functions of $(\psi_{\hbar_k})$, where $\hbar_k\to 0^+$, and:
\[
\widehat{H}_{\hbar_k}\psi_{\hbar_k} = \lambda_{\hbar_k} \psi_{\hbar_k},\quad \|\psi_{\hbar_k}\|_{L^2(\IR^d)}=1,\quad \lim_{k\to\infty}\lambda_{\hbar_k}=1. 
\]
Note that the functions $\psi_{\hbar_k}$ are linear combinations of joint eigenvectors of $-\hbar_k\partial_{x_j}+x_j^2$, $j=1,\hdots,d$, and therefore also of joint eigenvectors of $\Op_{\hbar_k}(\cH_n)$, $n=1,\hdots,d_\omega$. It follows from \eqref{e:spectrum} that the eigenvalues of $\Op_\hbar(\cH_n)$ are contained in $\hbar \left\la v_n \right\ra_\IQ $. The decomposition \eqref{e:decper} then implies that the eigenvalues of $\widehat{H}_\hbar$ are contained in $\hbar S_\omega$. Since $\{ v_n\}_{n=1, \ldots, d_\omega}$ form a basis of $S_\omega$, it follows that there exist unique $\lambda_{\hbar_k}^n\in\Spec{(\Op_{\hbar_k}(\cH_n))}$, $n=1,\hdots,d_\omega$, such that:
\begin{equation}
\label{e:eigenvalue_decomposition}
\lambda_{\hbar_k}=\lambda_{\hbar_k}^1+\hdots+\lambda_{\hbar_k}^{d_\omega},
\end{equation}
and necessarily  the eigenfunctions $\psi_\hbar$ must satisfy:
\begin{equation}\label{e:joineing}
\Op_{\hbar_k}(\cH_n)\psi_{\hbar_k}^n=\lambda_{\hbar_k}^n\psi_{\hbar_k}.
\end{equation}
Note that $\lambda_{\hbar_k}=\hbar_k\left(\omega\cdot l_k+\frac{|\omega|_1}{2}\right)$ for some sequence $(l_k)$ in $\IZ^d_+$ such that $(\hbar_kl_k)$ is bounded (recall that the frequencies $\omega_j$ are positive). This implies that each sequence $(\lambda_{\hbar_k}^n)$ is bounded. After extracting a subsequence, that we do not relabel, we can assume that 
\[
\forall n=1,\hdots,d_\omega,\;\lim_{k\to\infty}\lambda_{\hbar_k}^n= E_n,\quad\text{ and }\quad \sum_{n=1}^{d_\omega}E_n= 1. 
\]
Identity \eqref{e:joineing} then ensures that:
\begin{equation}\label{e:mloc}
(\cH_n-E_n)\mu=0,\quad X_{\cH_n}\mu=0, \quad n=1,\hdots, d_\omega,
\end{equation}
where $ X_{\cH_n}$ denotes the Hamiltonian vector field of $ \cH_n$. Note that that $E=(E_1,\hdots,E_{d_\omega})\in \Sigma_\cH$ as a consequence of this, otherwise \eqref{e:mloc} would imply $\mu=0$. Therefore, $\mu\in\cM_E(\cH)$ as claimed.
This concludes the proof of the theorem.


\end{document}